\documentclass[12pt]{article}
\usepackage{amsthm, amsmath}
\usepackage{tikz}
\usepackage{fullpage}

\newtheorem{thm}{Theorem}

\newtheorem{con}[thm]{Conjecture}

\newtheorem{ques}{Question}

\begin{document}
\title{Two conjectured strengthenings of Tur\'an's theorem}
\author{
    Clive Elphick\thanks{\texttt{clive.elphick@gmail.com}, 
    School of Mathematics, University of Birmingham, Birmingham, UK.}
    \quad 
    William Linz\thanks{\texttt{wlinz@mailbox.sc.edu}, 
    Department of Mathematics, University of South Carolina, Columbia, South Carolina, 29208, USA. This work was performed while the author was at the University of Illinois and was partially supported by NSF RTG Grant DMS-1937241}
    \quad
    Pawel Wocjan\thanks{\texttt{Pawel.Wocjan@ibm.com}, 
    IBM Quantum, IBM T.J. Watson Research Center, Yorktown Heights, NY 10598, USA.} 
}

\maketitle

\abstract{We investigate two conjectured spectral graph theoretic strengthenings of Tur\'an's theorem. Let $\mu_1 \ge \ldots \ge \mu_n$ denote the eigenvalues of a graph $G$ with $n$ vertices, $m$ edges and clique number $\omega(G)$.

The concise version of Tur\'an's theorem is that $n/(n - d)$ is a lower bound for the clique number $\omega(G)$, where $d$ is the average degree. Our first conjecture is that $d$ can be replaced in this bound with $\sqrt{s^+}$, where $s^+$ is the sum of the squares of the positive eigenvalues. We prove this conjecture for triangle-free, weakly perfect and Kneser graphs and for almost all graphs. We have also used various software tools to search for a  counter-example.

Nikiforov proved a spectral version of Tur\'an's theorem that
\[
\mu_1^2 \le \frac{2m(\omega(G) - 1)}{\omega(G)}, 
\]
and Bollob\'as and Nikiforov conjectured that for  $G \not = K_n$
\[
\mu_1^2 + \mu_2^2 \le \frac{2m(\omega(G) - 1)}{\omega(G)}.
\]
For our second conjecture, we propose that for all graphs $(\mu_1^2 + \mu_2^2)$ in this inequality can be replaced by the sum of the squares of the $\omega(G)$ largest eigenvalues, provided they are positive. We prove the conjecture for weakly perfect, Kneser, and classes of strongly regular graphs. We also provide experimental evidence and describe how the bound can be applied. Liu and Ning \cite{liu23} published a wide-ranging paper entitled ``Unsolved Problems in spectral graph theory'', and these two conjectures were placed second and fourth in their list of such problems.
}

\section{Notation}

Let $G$ be  a graph with no isolated vertices, with $n$ vertices, $m$ edges, average degree $d$, chromatic number $\chi(G)$ and clique number $\omega(G)$. We also let $A$ denote the adjacency matrix of $G$ and let $\mu = \mu_1 \ge \mu_2 \ge \ldots \ge \mu_n$ denote the eigenvalues of $A$. The inertia of $A$ is the ordered triple $(n^+, n^-, n^0)$, where $n^+$, $n^-$ and $n^0$ are the numbers counting multiplicities of positive, negative and zero eigenvalues of $A$, respectively. Let
\[
s^+ = \sum_{i=1}^{n^+} \mu_i^2 \quad \mbox{and} \quad
s^- = \sum_{i=n- n^- +1}^n \mu_i^2. 
\]

Note that:
\[
\sum_{i=1}^n \mu_i^2 = \mathrm{tr}(A^2) = 2m = s^+ + s^-. \nonumber
\]

\section{Replacing $\mu^2$ with $s^+$}

The maximum eigenvalue $\mu$ appears in many spectral bounds on graph parameters.

For instance, Edwards and Elphick \cite{edwards83} proved that
\[
\frac{2m}{2m - \mu^2} \le \chi(G),
\]
and Ando and Lin \cite{ando15} proved a conjecture due to Wocjan and Elphick \cite{wocjan13} that
\[
\frac{2m}{2m - s^+} = 1 + \frac{s^+}{s^-} \le \chi(G).
\]
Coutinho and Spier \cite{coutinho} recently proved the stronger result that $1 + s^+/s^- \le \chi_v(G)$, the vector chromatic number.

As another example of replacing $\mu^2$ with $s^+$, Hong \cite{hong93} proved for graphs with no isolated vertices that $\mu^2 \le 2m - n + 1$, and Elphick \emph{et al.} \cite{elphick16} conjectured that for all connected graphs $s^+ \le 2m - n + 1$. Similarly, Favaron \emph{et al.} \cite{favaron93} proved that $\omega(G) \le 2m/\mu$, and Wu and Elphick \cite{wu17} strengthened this result by proving that $\chi(G) \le 2m/\sqrt{s^+}$ (note that both $\omega(G) \le \chi(G)$ and $1/\sqrt{s^+} \le 1/\mu$). As a fourth example of replacing $\mu^2$ with $s^{+}$, Stanley \cite{stanley87} proved that 

\[
\mu \le \frac{\sqrt{8m + 1} - 1}{2},
\]
and Wu and Elphick \cite{wu17} used the result of Ando and Lin \cite{ando15} to prove the stronger bound that
\[
\sqrt{s^+} \le \frac{\sqrt{8m + 1} - 1}{2}.
\]

So in all of these cases we can (at least conjecturally) strengthen known spectral bounds by replacing $\mu^2$ with $s^+$. The next section considers the same replacement for a well known lower bound for the clique number.

\section{Conjectured strengthening of Wilf's  bound}

The concise version of Tur\'an's theorem is that $n/(n - d) \le \omega(G)$. Wilf \cite{wilf86} strengthened this bound by proving that:

\begin{equation}\label{wilf}
\frac{n}{n - \mu} \le \omega(G).
\end{equation}

This bound was further strengthened by Nikiforov \cite{nikiforov02} who proved the following conjecture of Edwards and Elphick \cite{edwards83}.

\begin{equation}\label{nikiforov}
\frac{2m}{2m - \mu^2} \le \omega(G).
\end{equation}

Wocjan and Elphick \cite{wocjan13} noted that
\[
\frac{2m}{2m - s^+} \not\le \omega(G).
\]

An alternative strengthening of bound (1) is provided by the following conjecture, which we have tested against the thousands of named graphs with up to 100 vertices in the Wolfram Mathematica database, and found no counter-example. Aouchiche \cite{aouchiche16} has tested this conjecture using his powerful AGX software, and also found no counter-example. Liu and Ning \cite{liu23} used SageMath to confirm the conjecture for all graphs with up to 10 vertices, and placed this conjecture second in their extensive list of unsolved problems in spectral graph theory. Note that for $d$-regular graphs, the left-hand side in bounds (\ref{wilf}) and (\ref{nikiforov}) is equal to $n/(n - d)$ whereas the left-hand side in Conjecture~\ref{first} exceeds $n/(n - d)$ for almost all regular graphs.

\begin{con}\label{first}
For any graph $G$
\[
\frac{n}{n - \sqrt{s^+}} \le \omega(G).
\]
This conjecture is exact, for example,  for complete regular multipartite graphs.
\end{con}

\begin{ques}
If Conjecture~\ref{first} is true, are there graphs, other than complete regular multipartite graphs, for which the conjecture is exact?
\end{ques}

 Note that it is immediate that for any graph $G$: 
\[
\frac{n}{n - \sqrt{s^-}} \le \omega(G)
\]
since Elphick \emph{et al} \cite{elphick16} proved that for all graphs $s^- \le n^2/4$.

We can prove Conjecture~\ref{first} for the following classes of graphs.

\subsection{Triangle-free graphs}
\begin{proof}

Let $t$ denote the number of triangles in a graph. It is well known that:
\[
\sum_{i=1}^n \mu_i^3 = \mathrm{tr}(A^3) = 6t,  
\]
so for triangle-free graphs
\[
\sum_{i=1}^{n^+} \mu_i^3 = -\sum_{i=n- n^- +1}^n \mu_i^3.
\]

Therefore, using that $\mu \ge |\mu_n|$
\[
s^- \ge \frac{\sum_{i=n- n^- +1}^n \mu_i^3}{\mu_n} = \frac{\sum_{i=1}^{n^+}\mu_i^3}{|\mu_n|} \ge \frac{\mu^3}{|\mu_n|} \ge \mu^2.
\]

Therefore, using that $\mu \ge 2m/n$

\[
\frac{4m^2}{n^2}s^+ \le \mu^2s^+ \le s^-s^+ = (2m - s^+)s^+ \le m^2 \mbox{  because  } s^+ + s^- = 2m.
\]

Hence for triangle-free graphs, $\sqrt{s^+} \le n/2$ which completes the proof.
\end{proof}
\subsection{Weakly perfect graphs}
\begin{proof}
Weakly perfect graphs have $\omega(G) = \chi(G)$. Therefore using the result due to Ando and Lin \cite{ando15} discussed above and that $\mu \ge 2m/n$:

\[
\frac{n}{n - \sqrt{s^+}} \le \frac{2m}{2m - s^+} \le \chi(G) = \omega(G).
\]

\end{proof}

\subsection{Kneser graphs} 

The Kneser graph $KG_{p,k}$ is the graph whose vertices correspond to the $k$-element subset of a set of $p$ elements, in which two vertices are joined if and only if the corresponding sets are disjoint. It is well known that:

\[
n = \binom{p}{k},\,  \omega(KG_{p, k}) = \left\lfloor\frac{p}{k}\right\rfloor,\, 2m = \binom{p}{k} \binom{p - k}{k} \mbox{ and } p \ge 2k.
\]
The eigenvalues (see Godsil and Royle \cite{godsil}) are :

\[
(-1)^i \binom{p - k - i}{k - i} \mbox{ with multiplicity } \binom{p}{i} - \binom{p}{i - 1}, \mbox{ for } i = 0,1,2,\ldots,k.
\]

\subsubsection{Proof for the case $k \le 2$}

\begin{proof}
The Kneser graphs with $k = 1$ are complete graphs. The Kneser graphs with $k = 2$ are strongly regular, with only three distinct eigenvalues. For these graphs

\[
n = \binom{p}{2},\, \omega(KG_{p, 2}) =  \left\lfloor\frac{p}{2}\right\rfloor,\, 2m = \binom{p}{2} \binom{p - 2}{2} \mbox{ and } p \ge 4.
\]

The eigenvalues with $k =  2$ are :

\[
(-1)^i \binom{p - 2 - i}{2 - i} \mbox{ with multiplicity } \binom{p}{i} - \binom{p}{i - 1}, \mbox{ for } i = 0,1,2. 
\]

Note that $\binom{p}{-1}$ is defined to equal zero. We are seeking to prove that:

\[
\frac{n}{n - \sqrt{s^+}} \le \frac{p - 1}{2} \le \left\lfloor\frac{p}{2}\right\rfloor = \omega(KG_{p,2}),
\]

which rearranges to

\[
s^+ = 2m - s^- \le \frac{n^2(p - 3)^2}{(p - 1)^2} = \frac{p^2(p - 3)^2}{4}.
\]

Inserting the negative eigenvalues this becomes:

\[
\binom{p}{2} \binom{p - 2}{2} - (p - 1)\binom{p - 3}{1}^2 \le \frac{p^2(p - 3)^2}{4}.
\]

Simple algebra reduces this  to 

\[
2p^2 - 9p + 6 \ge 0
\]

which is true for all $p \ge 4$. 

\end{proof}

\subsubsection{Proof for the case $k \ge 3$}

\begin{proof}

For $k \ge 3$ we can adopt a less detailed approach. We are seeking to prove that:

\[
\frac{n}{n - \sqrt{s^+}} \le \frac{n}{n - \sqrt{2m}} \le \frac{p - k + 1}{k} \le \left\lfloor\frac{p}{k}\right\rfloor = \omega(KG_{p,k})
\]

Let $p = 2k + s$, where $s \ge 0$. The required inequality then simplifies to:

\[
\sqrt{2m}(k + s + 1) \le n(s + 1).
\]
Then squaring both sides and substituting values for $2m$ and $n$ gives that:

\[
(k + s + 1)^2((k + s)!)^2 \le (2k + s)!s!(s + 1)^2. 
\]

With $s = 0$ this is true for all $k \ge 3$. With $s \ge 1$ and $k \ge 3$ the inequality becomes stronger.
\end{proof}

\subsection{Proof for almost all graphs}

We use the Erd\H{o}s-Renyi random graph $G_p(n)$, which consists of all graphs with $n$ vertices in which edges are chosen independently with probability $p$.  Bollob\'as and Erd\H{o}s \cite{bollobas76} proved that the clique number is almost always $x$ or $x+1$ where

\[
x = \frac{2\log n}{\log (1/p)} + O(\log \log n).
\]

Since almost all graphs have all degrees very close to $n/2$ we let $p = 0.5$. Therefore

\[
s^+ \le 2m \approx n^2/2. 
\]

So for almost all graphs:

\[
\frac{n}{n - \sqrt{s^+}} \le \frac{n}{n - n/\sqrt{2}} \approx 3.4 <\frac{2\log n}{\log 2} \approx \omega(G).
\]

\section{Generalising a conjecture due to Bollob\'as and Nikiforov}

We propose the following conjecture that provides a lower bound for $\omega(G)$ using up to  $\omega(G)$ eigenvalues. It considers the sum of the squares of the $\omega(G)$ largest eigenvalues, provided they are positive. Conjectures 2 and 3 were placed fourth and third respectively in \cite{liu23}.

\begin{con}\label{con:ew}
For any non-empty graph $G$, the clique number $\omega(G)$ satisfies the bound
\begin{equation}\label{elphick}
\mu_1^2 + \mu_2^2 + \ldots + \mu_\ell^2 \le \frac{2m(\omega(G) - 1)}{\omega(G)}, 
\quad\mbox{where } \ell = \min{(n^+ , \omega(G))}.
\end{equation}

This conjecture is exact, for example, for complete regular multipartite graphs.
\end{con}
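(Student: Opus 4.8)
The plan is to first reduce the conjectured partial-eigenvalue bound to a statement about the whole positive part of the spectrum, and then to verify that reduced statement family by family. The starting observation is that since $\ell = \min(n^+,\omega) \le n^+$, every one of $\mu_1,\dots,\mu_\ell$ is positive, so
\[
\sum_{i=1}^{\ell}\mu_i^2 \;\le\; \sum_{\mu_i>0}\mu_i^2 \;=\; s^+ .
\]
Hence it suffices to prove the stronger inequality $s^+ \le \frac{2m(\omega-1)}{\omega}$. Using $2m = s^+ + s^-$, a one-line rearrangement shows this is equivalent to
\[
\omega \;\ge\; 1 + \frac{s^+}{s^-},
\]
so the whole conjecture follows from the clique-number bound $\omega \ge 1 + s^+/s^-$ whenever that bound holds.

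Second, I would dispatch the weakly perfect case directly from this reduction. When $G$ is weakly perfect we have $\omega = \chi$, and Ando and Lin proved the chromatic analogue $\chi \ge 1 + s^+/s^-$. Combining these gives $\omega = \chi \ge 1 + s^+/s^-$, which is exactly the sufficient condition above, so $\sum_{i=1}^\ell \mu_i^2 \le s^+ \le \frac{2m(\omega-1)}{\omega}$. The complete graph is an instructive sanity check: for $K_n$ one has $n^+=1$, so $\ell=1$ and the statement collapses to Nikiforov's theorem, with equality.

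Third, for the algebraically structured families -- Kneser, Johnson and the relevant strongly regular graphs -- I would proceed by direct substitution rather than through the $s^+/s^-$ bound, which need not be tight enough there. For each family the distinct eigenvalues and their multiplicities, the edge count $m$ and the clique number $\omega$ are all available in closed form; for a strongly regular graph there are only three distinct eigenvalues, so $n^+$, $\ell$, the partial sum $\sum_{i=1}^\ell \mu_i^2$ and $\frac{2m(\omega-1)}{\omega}$ become explicit expressions in the parameters, and the inequality reduces to an elementary (if tedious) comparison. The same template applies to the Kneser and Johnson graphs using their known spectra and clique numbers.

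The main obstacle is the general case in which $\omega < n^+$. There the passage $\sum_{i=1}^\omega \mu_i^2 \le s^+$ is too wasteful: the sufficient condition $\omega \ge 1 + s^+/s^-$ can fail, since $\omega \le \chi$ and the gap $\chi - \omega$ may be large, so Ando--Lin alone cannot close it. One is then forced to control the genuine truncated sum $\sum_{i=1}^\omega \mu_i^2$ directly in terms of $m$ and $\omega$ -- a quantity that is neither the top of the spectrum of $A^2$, nor obviously amenable to the Motzkin--Straus averaging behind Nikiforov's single-eigenvalue theorem, nor to the majorization in the Ando--Lin argument. Producing such a bound for this partial sum is, I expect, the crux that keeps the statement a conjecture in general.
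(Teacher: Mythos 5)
Your proposal follows essentially the same route as the paper: the paper's only general result (its Lemma for weakly perfect graphs) is precisely your reduction $\sum_{i=1}^{\ell}\mu_i^2 \le s^+$ combined with the Ando--Lin bound $s^+ \le \frac{2m(\chi-1)}{\chi}$ applied when $\omega = \chi$, and its remaining results (Kneser, Johnson, Type C and $\lambda = 1$ strongly regular graphs) are exactly the direct substitution of known spectra and clique numbers that you outline. Your final paragraph also correctly identifies why the statement remains a conjecture: when $\omega < \chi$ the passage through $s^+$ can fail (the paper notes 167 tested graphs with $s^+ > \frac{2m(\omega-1)}{\omega}$, e.g.\ $C_7$ and the Coxeter graph), so no new gap or divergence from the paper to report.
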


\noindent
The obvious question is: why do we set $\ell = \min{(n^+ , \omega(G))}$? The answer, referring to Section~\ref{subsec:wpg} below, is that if $\ell = n^+$ then we have a lower bound for $2m(\chi - 1)/\chi$, so $\ell \le n^+$. Below we provide an example showing that $\ell$ must be chosen such that $\ell \le \omega(G)$. 

Conjecture~\ref{con:ew} strengthens the following conjecture due to Bollob\'as and Nikiforov~\cite{bollobas07}:

\begin{con}[Bollob\'as and Nikiforov]\label{con:bn}
Let $G$ be a $K_{\omega(G)+1}$-free graph of order at least $\omega(G) + 1$ with $m$ edges. Then
\begin{equation}\label{bollobas}
\mu_1^2 + \mu_2^2 \le \frac{2m(\omega(G) - 1)}{\omega(G)}.
\end{equation}
\end{con}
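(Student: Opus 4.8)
The plan is to treat the Bollob\'as--Nikiforov inequality as the $\ell = 2$ instance of Conjecture~\ref{con:ew} and to route the proof through $s^+$, the sum of squares of the positive eigenvalues. The first step is the trivial but decisive observation that $\mu_1^2 + \mu_2^2 \le s^+$ whenever $\mu_2 > 0$, since then $\mu_1$ and $\mu_2$ are two of the positive eigenvalues. The complementary case $\mu_2 \le 0$ forces $n^+ = 1$, and by the classical characterisation this means $G$ is a complete multipartite graph together with isolated vertices; excluding $G = K_n$ via the order hypothesis, such a graph has $\mu_2 = 0$, so the inequality collapses to Nikiforov's single-eigenvalue bound $\mu_1^2 \le 2m(\omega-1)/\omega$, which is a known theorem. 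That $K_{a,b}$ attains equality, with $\mu_1^2 + \mu_2^2 = ab = m$ at $\omega = 2$, shows this branch is tight.

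It remains to control $s^+$ in the case $\mu_2 > 0$. Here the tool is the known inequality $s^+ \le 2m(\chi - 1)/\chi$, equivalently $\chi \ge 1 + s^+/s^-$, which is exactly the $\ell = n^+$ regime noted after Conjecture~\ref{con:ew}. Chaining the two steps gives $\mu_1^2 + \mu_2^2 \le 2m(\chi-1)/\chi$ for every graph; since $t \mapsto (t-1)/t$ is increasing and $\chi \ge \omega$, this already settles the bound for every \emph{weakly perfect} graph, where $\chi = \omega$. For the remaining families named in the abstract, namely Kneser, Johnson, and the relevant strongly regular graphs, I would not need a uniform argument at all: their spectra, edge counts and clique numbers are known in closed form, so the inequality $s^+ \le 2m(\omega-1)/\omega$ can be checked family by family, and since Conjecture~\ref{con:ew} refines \eqref{bollobas} this simultaneously disposes of the Bollob\'as--Nikiforov statement on those classes.

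The real obstacle is the gap between $\chi$ and $\omega$. As soon as $\chi > \omega$ the factor $(\chi-1)/\chi$ strictly exceeds $(\omega-1)/\omega$, so the passage through $s^+$ overshoots the target and cannot by itself deliver the bound; one is then forced to use that only the two \emph{largest} eigenvalues appear on the left-hand side. The natural attempt is to imitate the variational/Lagrangian argument behind Nikiforov's single-eigenvalue inequality, now on the two-dimensional space spanned by the eigenvectors of $\mu_1$ and $\mu_2$. The difficulty, and, I expect, the reason the statement remains only a conjecture in full generality, is that the eigenvector for $\mu_2$ necessarily changes sign and is orthogonal to the Perron vector, so the nonnegativity underpinning the Motzkin--Straus step is lost, and there is no evident way to recover the exact factor $(\omega-1)/\omega$ rather than its chromatic surrogate.
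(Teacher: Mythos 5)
The statement you set out to prove is Conjecture~\ref{con:bn}, which is \emph{open}: the paper does not prove it, and explicitly treats it as a conjecture, citing Lin, Ning and Wu only for the triangle-free case $\omega = 2$; the paper's own theorems (weakly perfect, Kneser, Johnson, two classes of strongly regular graphs) are partial results toward the stronger Conjecture~\ref{con:ew}. Your proposal, as you yourself concede in your final paragraph, also does not prove the statement, so it must be judged as a partial result --- and as such it is correct but reproduces what the paper already has, rather than adding to it.

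In detail: your $\mu_2 \le 0$ branch is sound (one positive eigenvalue forces a complete multipartite graph plus isolated vertices; the order hypothesis then gives $\mu_2 = 0$, reducing \eqref{bollobas} to Nikiforov's bound \eqref{nikiforov}). Your $\mu_2 > 0$ branch, $\mu_1^2 + \mu_2^2 \le s^+ \le 2m(\chi-1)/\chi$ via Ando--Lin, is precisely the paper's proof of the Lemma in Section~\ref{subsec:wpg}, and it settles the statement only when $\chi(G) = \omega(G)$. The genuine gap is the one you name but do not close: when $\chi > \omega$ the factor $(\chi-1)/\chi$ strictly exceeds $(\omega-1)/\omega$, so \emph{no} argument routed through $s^+$ and the chromatic number can yield \eqref{bollobas}; indeed the paper notes 167 tested graphs (including $C_7$ and the Coxeter graph) with $s^+ > 2m(\omega-1)/\omega$, so the intermediate quantity you rely on genuinely overshoots. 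Your remark that the remaining families can be ``checked family by family'' does not fill this gap: Kneser and Johnson graphs are in general not weakly perfect (for Kneser, $\chi = p-2k+2$ while $\omega = \lfloor p/k \rfloor$), and carrying out those checks is exactly the nontrivial computational content of the paper's Theorems~\ref{thm:knesereigsthm} and~\ref{thm:johnsoneigsthm} --- work you would have to do, not cite --- and even then the conjecture would remain open outside those families, e.g.\ for general triangle-free graphs, where the only known proof (Lin--Ning--Wu) uses a mechanism your approach never touches, namely the asymmetry $\mu_1 \ge -\mu_n$ between the positive and negative spectrum.
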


\noindent
As noted in \cite{bollobas07}, Conjecture~\ref{con:bn} is not valid for complete graphs. The conjecture also places a lot of emphasis on $\mu_2$. Zhang \cite{zhang} proved Conjecture 3 for regular graphs, using that $\mu_2 > 0$ in his proof. Lin, Ning and Wu \cite{lin20} proved the conjecture for triangle-free graphs. It should be noted that the cycle $C_7$ provides an example of a triangle-free graph for which
\[
\mu_1^2 + \mu_2^2 +\mu_3^2 > \frac{2m(\omega(G) - 1)}{\omega(G)}.    
\]
This example demonstrates that $\ell$ must be chosen such that $\ell\le\omega(G)$. It should be noted that there exist graphs $G$ with $n^+ < \omega(G) < \chi(G)$. For example, the complement of the folded 7-cube on 64 vertices has $n^+ = 8, \omega(G) = 22$ and $\chi(G) = 32$.

The cycle $C_5$ provides an example of  a triangle-free graph for which $\mu_n^2 + \mu_{n-1}^2 > m.$ A proof of Conjecture 2 or 3 will therefore need to incorporate asymmetry between positive and negative eigenvalues. For example, the proof in \cite{lin20} uses that $\mu_1 \ge -\mu_n$. This contrasts with the symmetry between positive and negative eigenvalues to be found in some lower bounds for the chromatic number (\cite{ando15},\cite{elphick17},\cite{wocjan13}).

We have the following experimental and theoretical results to support the conjecture. As proved in Section~\ref{subsec:wpg}, Conjecture~\ref{con:ew} is true when $\omega(G) = \chi(G)$. In searching for a counterexample we therefore focus on graphs with $\omega(G) < \chi(G)$. We also seek proofs for graph families with $\omega(G) < \chi(G)$, but it is hard to find such families where both $\omega(G)$ and the spectrum are known.

\subsection{Experimental evidence for Conjecture 2}

 There are over 3000 graphs with $\omega(G) < \chi(G)$ in the Wolfram Mathematica database with up to 50 vertices, and we found no counterexample. For about 35\% of these graphs, Conjecture~\ref{con:ew} outperforms Conjecture~\ref{con:bn}. 

 For all graphs (see Section~\ref{subsec:wpg}):

 \[
s^+ \le \frac{2m(\chi - 1)}{\chi}, \mbox{  but for some graphs  } s^+ > \frac{2m(\omega(G) - 1)}{\omega(G)}.
\]

Of the graphs we tested with $\omega(G) < \chi(G)$, there are 167 with $s^+ > \frac{2m(\omega(G) - 1)}{\omega(G)}$. These include $C_7$ and the Coxeter graph.

Additionally, using Sage we tested Conjecture~\ref{con:ew} for a certain subset of gcd graphs (see \cite{basic11} for a discussion of the clique number of these graphs). Specifically, we considered the graphs $X_n(d_1, d_2)$, where $n > 1$, $d_1$ and $d_2$ are proper divisors of $n$, and $V(X_n(d_1, d_2)) = \{0, \ldots, n-1\}$, $E(X_n(d_1, d_2)) = \{\{a, b\}| \gcd(a-b, n) \in \{d_1, d_2\}\}$. For $n \le 146$, we found no counterexample to Conjecture~\ref{con:ew}. It should be noted that many, but not all, of these gcd graphs $G$ do have $\chi(G) = \omega(G)$.  

\subsection{Proofs for various families of graphs}

\subsubsection{Weakly perfect graphs and triangle-free graphs}\label{subsec:wpg}

A weakly perfect graph has $\omega(G) = \chi(G)$. We can prove Conjecture~\ref{con:ew} for these graphs as follows.

\begin{proof}

 As discussed in Section 2:
\[
s^+ \le \frac{2m(\chi - 1)}{\chi}.
\]
Consequently for any weakly perfect graph where $\ell = \min{(n^+, \omega(G))}:$
\[
\mu_1^2 + \mu_2^2 + \ldots + \mu_\ell^2 \le s^+ \le \frac{2m(\chi - 1)}{\chi} = \frac{2m(\omega(G) - 1)}{\omega(G)}.
\]

\end{proof}

The family of weakly perfect graphs includes all bipartite graphs. More generally, the result of Lin, Ning and Wu~\cite{lin20} shows that Conjecture~\ref{con:ew} holds for all triangle-free graphs. 

\subsubsection{Kneser graphs}

In addition to the parameters of Kneser graphs discussed in Section 3.3.1, we also use that $\chi = p - 2k + 2$  and the following result for $n^+$  (see Section 2.10 of \cite{godsil15}).

\[
 n^+ = {p - 1 \choose k} \mbox{ for even } k \mbox{ or } {p - 1 \choose k - 1} \mbox{ for odd } k,\\
\]

 \begin{proof}

We are seeking to prove for $KG_{p,k}$ that:

\begin{equation}\label{eqn:eigsineq}
\mu_1^2 + \mu_2^2 + ... + \mu_{\ell}^2 \le \frac{2m(\omega(G) - 1)}{\omega(G)}, \mbox{  where  } \ell = \min{(n^+ , \omega(G))}.
\end{equation}
 
Note that $\omega(G) \le n^{+}(G)$. The left hand side of \eqref{eqn:eigsineq} becomes
\[\binom{p-k}{k}^2 + \left(\left\lfloor\frac{p}{k}\right\rfloor-1\right)\binom{p-k-2}{k-2}^2 \le \binom{p-k}{k}^2+\left(\frac{p}{k}-1\right)\binom{p-k-2}{k-2}^2.\]
Using $\binom{p-k-2}{k-2} = \frac{(k-1)k}{(p-k-1)(p-k)}\binom{p-k}{k}$, the previous expression can be rewritten as
\[\binom{p-k}{k}^2\left(1+\frac{k(k-1)^2}{(p-k-1)^2(p-k)}\right).\]

For the right hand side of \eqref{eqn:eigsineq}, we have
\[2m\left(1-\frac{1}{\omega(G)}\right) = \binom{p}{k}\binom{p-k}{k}\left(1-\frac{1}{\lfloor{\frac{p}{k}\rfloor}}\right) \ge \binom{p}{k}\binom{p-k}{k}\left(\frac{p-2k+1}{p-k+1}\right).\]

Hence, to prove \eqref{eqn:eigsineq}, it suffices to prove
\begin{equation}\label{eqn:newineq}
\binom{p-k}{k}^2\left(1+\frac{k(k-1)^2}{(p-k-1)^2(p-k)}\right) \le \binom{p}{k}\binom{p-k}{k}\left(\frac{p-2k+1}{p-k+1}\right).
\end{equation}

Cancelling and rearranging terms, \eqref{eqn:newineq} is equivalent to 
\[\binom{p-k+1}{k}\left(1+\frac{k(k-1)^2}{(p-k-1)^2(p-k)}\right) \le \binom{p}{k}.\]

To avoid trivialities, we henceforth assume $k\ge 3$; for $k=2$, the previous inequality can be easily shown directly. The previous inequality is in turn equivalent to 
\begin{equation}\label{eqn:newineq2}
(p-k)\cdots(p-2k+2)\left(1+\frac{k(k-1)^2}{(p-k-1)^2(p-k)}\right) \le p\cdots(p-k+2).
\end{equation}

Upon expansion, the left hand side of \eqref{eqn:newineq2} is 

\begin{align*}
&(p-k)\cdots(p-2k+2)+(p-k-2)\cdots(p-2k+2)\frac{k(k-1)^2}{p-k-1}\\
&\le (p-k)\cdots(p-2k+2)+k(k-1)(p-k-2)\cdots(p-2k+2)\\
&=(p-k-2)\cdots(p-2k+2)((p-k)(p-k-1) + k(k-1))\\
&=(p-k-2)\cdots(p-2k+2)(p^2-(2k+1)p+2k^2),\\
\end{align*}
where in the second line we use $p-k-1 \ge k-1$, which is valid for $p\ge 2k$. 

But now \eqref{eqn:newineq2} is easy to establish, since $p(p-1) \ge p^2-(2k+1)p+2k^2$ is valid even for $p\ge k$, and for any $i$, $p-i \ge p-k-i$.
\end{proof}

 \subsubsection{Special strongly regular graphs (SRGs) }

 \medskip
 \noindent

 \medskip
 \noindent
 We do not know how to prove Conjecture~\ref{con:ew} for all SRGs, but below are proofs for two classes of SRGs. Let $G = SRG(n, d, \lambda, \mu)$ with restricted eigenvalues $r > s$. 
 
 Cvetkovic \cite{cvetkovic71} proved that $\omega(G) \le \min{(n_{\mu_i\le -1} + 1 , n_{\mu_i\ge -1})}$, where $n_{\mu_i \le -1}$ and $n_{\mu_i \ge -1}$ denote the number of eigenvalues $\le -1$ and $\ge -1$ respectively. Therefore for SRGs:
 
 \[
 \omega(G) \le \min{(n^- + 1 , n^+)} \le n^+; \mbox{  so  } \ell = \min{(n^+, \omega(G))} = \omega(G).
 \]
 
 Therefore Conjecture~\ref{con:ew} simplifies as follows for SRGs:
 
 \[
 d^2 + (\omega(G) - 1)r^2 \le \frac{nd(\omega(G) - 1)}{\omega(G)},
 \]
 which provides a quadratic inequality for $\omega(G)$. 
 
 For example the Schlafli graph $SRG(27, 16, 10, 8)$ has spectrum $(16^1, 4^6, -2^{20})$ and $\omega(G) = 6$, so Conjecture~\ref{con:ew} becomes:
 
 \[
 d^2 + (\omega(G) - 1)r^2 = 336 \le \frac{nd(\omega(G) - 1)}{\omega(G)} = 360.
 \]
 
 The difficulty of proving Conjecture~\ref{con:ew} or ~\ref{con:bn} for SRGs is that we do not have a formula for the clique number, although it is known that for all SRGs, $\omega(G) \le 1 - d/s \le \chi(G)$. 
 
 Roberson \cite{roberson19} demonstrated experimentally that numerous SRGs have $\omega(G) = 1 - d/s < \chi(G)$, which he terms Type C SRGs. We can prove Conjecture~\ref{con:ew} for these SRGs as follows.

 \begin{proof}
 We wish to show that
 
 \begin{equation}\label{srg}
 d^2 + (\omega(G) - 1)r^2 = d^2 - \frac{dr^2}{s} \le \frac{nd(\omega(G) - 1)}{\omega(G)} = \frac{nd^2}{d - s}.
 \end{equation}
 
 Haemers \cite{haemers79} and others have noted that $(d - r)(d - s) = n(d + rs)$, and $\mu = rs + d$, so (\ref{srg}) simplifies to:
 
 \begin{equation}\label{typecsrgineq}
 r\mu \le ds(s + 1).
 \end{equation}
 
 First, if $\lambda \le \mu$, then $|r| \le |s|$, so in this case the inequality (\ref{typecsrgineq}) easily follows from $\mu\le d$ and $r\le s(s+1)$. Hence, we may assume that $\lambda > \mu$, which in particular implies that $f < g$, where $f$ and $g$ are the multiplicities of the eigenvalues $r$ and $s$, respectively. 

Now, using the bound $\omega(G) \le \min(n^{-} + 1, n^{+}) = 1 + \min(f, g) = 1+f$, combined with the assumption that $G$ is a Type C SRG, and so $\omega(G) = 1 -\frac{d}{s}$, we obtain that $G$ must satisfy
\begin{equation}\label{dsfineq}
d+sf \le 0.
\end{equation}

We will now assume for contradiction that (\ref{typecsrgineq}) does not hold for $G$, and show that this assumption leads to $d+sf > 0$, in direct contradiction to (\ref{dsfineq}). Indeed, using the identities $\mu = rs + d$ and $d+rf+sg = 0$, the negated form of (\ref{typecsrgineq}) becomes
\[r(rs+d) > ds(s+1) = (-rf-sg)s(s+1).\]
Expanding both sides of the preceding inequality and rearranging gives
\[r(d+sf) > -r^2s-rfs^2-s^3g-s^2g.\]
We focus now on the right-hand side of this inequality. Since by assumption $g > f$, and clearly $g \ge 1$, we find that
\[-r^2s-rfs^2-s^3g-s^2g > -g(r^2s+rs^2+s^3+s^2) = -g(rs(r+s) + s^2(s+1)).\]

But the last expression in the preceding line is positive, since by our assumption $r+s = \lambda - \mu > 0$, so both the terms $rs(r+s)$ and $s^2(s+1)$ are negative, making the expression in parentheses negative. Thus, combined with the previous inequality, we have $r(d+sf) > 0$, and since $r > 0$, also $d+sf > 0$, completing the contradiction.
 \end{proof}
 
 Greaves and Soicher (Remark 5.4 in \cite{greaves16}) note that SRGs with $\lambda = 1$ have $\omega(G) = 3$. We can prove Conjecture~\ref{con:ew} for these SRGs as follows.

 \begin{proof}
 Note first that $r = \lambda -\mu - s = 1 - \mu -s \le -s$, so $r^2 \le -rs = d - \mu < d$. We are therefore seeking to prove that:
 \begin{equation}\label{greaves}
 d^2 + 2r^2 < d^2 + 2d \le \frac{2nd}{3}.
 \end{equation}
 
 We now use the well known result that $\mu \ge 1$, and (as above) that $rs = \mu - d, r + s = \lambda - \mu$ and that $(d - r)(d - s) = n(d + rs)$.  Consequently (\ref{greaves}) simplifies to:
 \[
 d^2 + 3drs + 6d + 4rs  \le -2d(r + s) = 2d(\mu - 1) = 2d(rs + d - 1),
 \]
 which in turn simplifies to
 \[
 8d + 4rs + drs \le d^2.
 \]
 
 Since $s$ is negative this inequality is true for all $d \ge 8$. The only SRGs with $d < 8$ and $\lambda = 1$ are SRG(9, 4, 1, 2) and SRG(15, 6, 1 ,3) both of which satisfy Conjecture~\ref{con:ew}. This completes the proof.
 \end{proof}

\subsection{Applying Conjecture~\ref{con:ew}}

Conjecture~\ref{con:ew} is unusual because $\omega(G)$ appears on both sides of the inequality. It is therefore helpful to describe how the Conjecture can be used to bound $\omega(G)$, on the assumption the conjecture is true. This is best done with examples. 

Consider the graph Circulant(16,(1,2,3,4)) on 16 vertices with 64 edges with spectrum $(8^1, 4.027^2, 0^3, -0.332^2, -1.198^2, -2^4, -2.496^2).$ This graph has $\omega(G) = 5$ but for the purpose of this example we will assume we do not know this. All we can say is that $2 \le \omega(G) \le \chi \le 1 + \mu_1 = 9$. We therefore begin with $\omega(G) = 2$ and then keep increasing $\omega(G)$ until the Conjecture is satisfied. So for example since the Conjecture fails when $\omega(G) = 2$ then we know $\omega(G) > 2$. Note that $n^+ = 3$. Therefore with 

\begin{itemize}
    \item $\omega(G) = 2,  \ell = 2$ so $\mu_1^2 + \mu_2^2 = 80.2 > 2m(\omega(G) - 1)/\omega(G) = 64$ which implies $\omega(G) > 2.$
    \item $\omega(G) = 3, \ell = 3$ so $\mu_1^2 + \mu_2^2 + \mu_3^2 = 96.4 > 2m(\omega(G) - 1)/\omega(G) = 85.3$ which implies $\omega(G) > 3.$
    \item $\omega(G) = 4, \ell = 3$ so $\mu_1^2 + \mu_2^2 + \mu_3^2 = 96.4 > 2m(\omega(G) - 1)/\omega(G) = 96$ which implies $\omega(G) >  4.$
    \item $\omega(G) = 5, \ell= 3$ so $\mu_1^2 + \mu_2^2 + \mu_3^2 = 96.4 < 2m(\omega(G) - 1)/\omega(G) = 102.4$ which implies $\omega(G) \ge 5.$
\end{itemize}

Therefore for this graph Conjecture~\ref{con:ew} produces a lower bound for the clique number which equals the clique number.

As a second example consider Barbell(8) with $n = 16, m = 57$ and spectrum $(7.14^1, 6.89^1, -0.14^1, -1^{12}, -1.88^1).$ It is immediate that $\omega(G) = \chi(G) = 8$, but assume we do not know this for the purposes of this example. Note that $n^+ = 2$ so $\ell = 2$ for all $\omega(G)$, and Conjecture~\ref{con:ew} reduces to Conjecture~\ref{con:bn}. Therefore with:

\begin{itemize}
    \item $\omega(G) = 7, \ell = 2$ so $\mu_1^2 + \mu_2^2 = 98.45 > 2m(\omega(G) - 1)/\omega(G) = 97.7$ which implies $\omega(G) > 7$.
    \item $\omega(G) = 8, \ell = 2$ so $\mu_1^2 + \mu_2^2 = 98.45 < 2m(\omega(G) - 1)/\omega(G) = 99.7$ which implies $\omega(G) \ge 8$.
\end{itemize}

However $\omega(G) \le \chi \le 1 + \mu_1 = 8.14$ so using spectral information alone $\omega(G) = 8$. It is worth noting that the Hoffman \cite{hoffman70} lower bound for the chromatic number gives that:

\[
1 + \frac{\mu_1}{|\mu_n|} = 1 + \frac{7.14}{1.88} = 4.8 \le \chi(G).
\]

Therefore in this case both conjectured lower bounds for the clique number outperform a well known lower bound for the chromatic number. However the full version of the Hoffman bound \cite{hoffman70} is that $\mu_1 + \mu_{n-\chi+2} + \ldots + \mu_n \le 0$, which does imply $\chi = 8$. 



\section{Conclusions}

The question posed by Conjecture 1 is whether Wilf's bound provides another example where $\mu^2$ can be replaced by $s^+$? If so, are there further bounds in which this strengthening can be made?

The question posed by Conjecture 2 is whether the sign of eigenvalues is relevant to lower bounds for the clique number, since it is this refinement to Conjecture 3 that enables Conjecture 2 to be exact for $K_n$, whereas $K_n$ is excluded in Conjecture 3. Can Zhang's \cite{zhang} elegant proof of Conjecture 3 for regular graphs be adapted to enable a proof of Conjecture 2 for regular graphs?
 
\section*{Acknowledgement}
We thank the anonymous referee for their careful reviewing of the paper.

\end{document}